\newtheorem{theorem}{Theorem}
\newtheorem{corollary}[theorem]{Corollary}
\newtheorem{lemma}[theorem]{Lemma}
\begin{document}
\onehalfspace
%\linenumbers

\title{Maximally distance-unbalanced trees}
\author{Marie Kramer \and Dieter Rautenbach}
\date{}

\maketitle

\begin{center}
{\small 
Institute of Optimization and Operations Research, Ulm University,\\ 
Ulm, Germany, \texttt{$\{$marie.kramer,dieter.rautenbach$\}$@uni-ulm.de}}
\end{center}

\begin{abstract}
For a graph $G$, and two distinct vertices $u$ and $v$ of $G$,
let $n_G(u,v)$ be the number of vertices of $G$ that are closer in $G$ to $u$ than to $v$.
Miklavi\v{c} and \v{S}parl (arXiv:2011.01635v1)
define the distance-unbalancedness ${\rm uB}(G)$ of $G$
as the sum of $|n_G(u,v)-n_G(v,u)|$
over all unordered pairs of distinct vertices $u$ and $v$ of $G$.
For positive integers $n$ up to $15$, 
they determine the trees $T$ of fixed order $n$
with the smallest and the largest values of ${\rm uB}(T)$, respectively.
While the smallest value is achieved by the star $K_{1,n-1}$ for these $n$,
which we then proved for general $n$ 
(Minimum distance-unbalancedness of trees,
Journal of Mathematical Chemistry, 
DOI 10.1007/s10910-021-01228-4),
the structure of the trees maximizing the distance-unbalancedness remained unclear.
For $n$ up to $15$ at least,
all these trees were subdivided stars.
Contributing to problems posed by Miklavi\v{c} and \v{S}parl,
we show 
$$\max\Big\{{\rm uB}(T):T\mbox{ is a tree of order }n\Big\}
=\frac{n^3}{2}+o(n^3)$$
and 
$$\max\Big\{{\rm uB}(S(n_1,\ldots,n_k)):1+n_1+\cdots+n_k=n\Big\}
=\left(\frac{1}{2}-\frac{5}{6k}+\frac{1}{3k^2}\right)n^3+O(kn^2),$$
where $S(n_1,\ldots,n_k)$ is the subdivided star 
such that removing its center vertex leaves paths of orders $n_1,\ldots,n_k$.\\[3mm]
{\bf Keywords:} Distance-unbalancedness; distance-balanced graph; Mostar index
\end{abstract}

\section{Introduction}
Inspired by graph theoretical notions studied in mathematical chemistry,
and, especially, by the notions of distance-balanced graphs~\cite{ha,je} 
and the Mostar index of a graph~\cite{do},
Miklavi\v{c} and \v{S}parl~\cite{misp2} 
introduced the distance-unbalancedness of a graph.
Proving one of their conjectures, 
we showed~\cite{krra} that stars 
minimize the distance-unbalancedness among all trees of a fixed order.
For positive integers $n$ up to $15$,
Miklavi\v{c} and \v{S}parl also determined the trees that
maximize the distance-unbalancedness among all trees of a fixed order $n$.
Apart from the observation that all these trees were subdivided stars
their structure remained somewhat elusive.

In order to define distance-unbalancedness and explain our contribution,
we introduce some notation.
We consider only finite, simple, and undirected graphs.
For a graph $G$, and two vertices $u$ and $v$ of $G$, 
let ${\rm dist}_G(u,v)$ denote the {\it distance in $G$ between $u$ and $v$}, and 
let $n_G(u,v)$ be the number of vertices $w$ of $G$ 
that are closer to $u$ than to $v$,
that is, that satisfy ${\rm dist}_G(u,w)<{\rm dist}_G(v,w)$.
The {\it Mostar index}~\cite{do} of $G$ is
$${\rm Mo}(G)=\sum\limits_{uv\in E(G)}|n_G(u,v)-n_G(v,u)|.$$
A graph $G$ is {\it distance-balanced}~\cite{ha,je} 
if $n_G(u,v)=n_G(v,u)$ for every edge $uv$ of $G$;
or, equivalently, if ${\rm Mo}(G)=0$.
The {\it distance-unbalancedness}~\cite{misp2} of $G$ is
\begin{eqnarray}\label{e1}
{\rm uB}(G)&=&\sum\limits_{\{ u,v\}\in {V(G)\choose 2}}|n_G(u,v)-n_G(v,u)|,
\end{eqnarray}
where ${V(G)\choose 2}$ denotes the set of 
all $2$-element subsets of the vertex set $V(G)$ of $G$,
that is, the edge set of the complete graph with vertex set $V(G)$.
A graph $G$ is {\it highly distance-balanced}~\cite{misp1} 
if $n_G(u,v)=n_G(v,u)$ for every two distinct vertices $u$ and $v$ of $G$;
or, equivalently, if ${\rm uB}(G)=0$.
For a detailed discussion about the role of the above notions in mathematical chemistry,
we refer to the cited references.

In~\cite{misp2} Miklavi\v{c} and \v{S}parl reported the computational result that
for each integer $n$ up to $15$, 
the trees 
maximizing ${\rm uB}(T)$ 
among all trees $T$ of fixed order $n$ 
are all subdivided stars.
For general $n$,
they posed the problem to determine the subdivided stars
that have the largest distance-unbalancedness 
among all subdivided stars of order $n$, cf.~Problem 4.7 in~\cite{misp2}.
For positive integers $n_1,\ldots,n_k$,
let the {\it subdivided star $S(n_1,\ldots,n_k)$}
arise from the star $K_{1,k}$ with the $k$ edges $e_1,\ldots,e_k$
by subdividing the edge $e_i$ exactly $n_i-1$ times for every $i\in \{ 1,\ldots,k\}$,
that is, $S(n_1,\ldots,n_k)$ has order $1+n_1+\cdots+n_k$,
and removing its center vertex 
yields a forest whose components are $k$ paths of orders $n_1,\ldots,n_k$.
Representing $S(n_1,\ldots,n_k)$ by the tuple $(n_1,\ldots,n_k)$,
the following tuples encode the subdivided stars 
maximizing the distance-unbalancedness
among the trees of fixed order $n$ for $5\leq n\leq 15$~\cite{misp2}:

\medskip

\begin{center}
\begin{tabular}{cccccccc}
$(2,1,1)$ & 
$(2,2,1)$ &
$(2,2,1,1)$ & 
$(2,2,2,1)$ &  
$(3,2,2,1)$ & 
$(3,2,2,2)$ & 
$(3,3,2,1)$ & 
$(3,2,2,1,1)$ \\ 
$(3,3,2,2)$ & 
$(3,3,2,2,1)$ & 
$(3,3,3,2,1)$ & 
$(3,3,3,2,2)$ &
$(4,3,3,2,2)$.
\end{tabular}
\end{center}

\medskip

For $16\leq n\leq 59$, we determined the following tuples $(n_1,\ldots,n_k)$
encoding the subdivided stars 
maximizing the distance-unbalancedness
among the subdivided stars of order $n$:

\medskip

\begin{center}
\begin{tabular}{ccccc}
(4,3,3,2,2,1) &
(4,3,3,3,2,1) &
(4,4,3,3,2,1) &
(4,4,3,3,2,2) &
(4,4,3,3,3,2) \\
(4,4,4,3,3,2) &
(4,4,4,3,3,2,1) &
(4,4,4,3,3,2,2) &
(5,4,4,3,3,2,2) &
(5,4,4,3,3,3,2) \\
(5,4,4,4,3,3,2) &
(5,5,4,4,3,3,2) &
(5,5,4,4,3,3,2,1) &
(5,5,4,4,3,3,2,2) &
(5,5,4,4,4,3,2,2) \\ 
(5,5,4,4,4,3,3,2) &
(5,5,5,4,4,3,3,2) &
(6,5,5,4,4,3,3,2) &
(6,5,5,4,4,4,3,2) &
(6,5,5,5,4,4,3,2) \\
(6,5,5,5,4,4,3,3) &
(6,5,5,5,4,4,3,3,1) &
(6,5,5,5,4,4,3,3,2) &
(6,6,5,5,4,4,3,3,2) &
(6,6,5,5,4,4,4,3,2) \\
(6,6,5,5,5,4,4,3,2) &
(6,6,5,5,5,4,4,3,3) &
(6,6,6,5,5,4,4,3,3) &
(6,6,6,5,5,4,4,3,3,1) &
(6,6,5,5,5,4,4,3,3,2) \\ 
(6,6,6,5,5,4,4,3,3,2) &
(6,6,6,5,5,4,4,4,3,2) &
(6,6,6,5,5,5,4,4,3,2) &
(7,6,6,5,5,5,4,4,3,2) &
(7,6,6,6,5,5,4,4,3,2) \\
(7,6,6,6,5,5,4,4,3,3) &
(7,6,6,6,5,5,5,4,3,3) &
(7,7,6,6,5,5,5,4,3,3) &
(7,7,6,6,5,5,5,4,4,3) &
(7,7,6,6,6,5,5,4,4,3) \\
(7,7,6,6,5,5,5,4,4,3,2) &
(7,7,6,6,6,5,5,4,4,3,2) &
(7,7,7,6,6,5,5,4,4,3,2) & 
(7,7,6,6,6,5,5,4,4,3,3) & 
(7,7,7,6,6,5,5,5,4,3,2) \\ 
(7,7,7,6,6,5,5,4,4,3,3) & 
(7,7,7,6,6,5,5,5,4,3,3)
\end{tabular}
\end{center}

\medskip

There are some obvious qualitative features:
\begin{itemize}
\item The number $k$ of branches grows (slowly) with $n$.
\item All $n_i$ are less $n/2$.
\item The values of the $n_i$ range somewhat linearly from small to medium values.
\item With few exceptions for $n\in \{ 10,44,57,58\}$, 
the maximizing subdivided star is unique for a given order $n$.
\end{itemize}
We were unable though to determine some quantifiable regularity.

For a tree $T$ of order $n$, 
equation (\ref{e1}) implies that ${\rm uB}(T)$
is the sum of ${n\choose 2}$ terms that are all at most $n-2$,
in particular, 
\begin{eqnarray}\label{e2}
{\rm uB}(T)\leq \frac{n(n-1)(n-2)}{2}.
\end{eqnarray}
Our main result in the present paper implies the following.
\begin{corollary}\label{corollary1}
$$\lim\limits_{n\to \infty}\frac{\max\Big\{{\rm uB}(T):T\mbox{ is a tree of order }n\Big\}}{n^3}
=\frac{1}{2}.$$
\end{corollary}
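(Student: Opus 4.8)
The plan is to sandwich the normalized quantity $\frac{1}{n^3}\max\{{\rm uB}(T):T\text{ is a tree of order }n\}$ between two bounds that both converge to $\frac{1}{2}$, using the trivial upper bound (\ref{e2}) on one side and the subdivided-star estimate from the abstract on the other. For the upper bound, I divide (\ref{e2}) by $n^3$ to obtain $\frac{{\rm uB}(T)}{n^3}\le\frac{(n-1)(n-2)}{2n^2}$ for every tree $T$ of order $n$, and the right-hand side tends to $\frac{1}{2}$ as $n\to\infty$. Hence $\limsup_{n\to\infty}\frac{1}{n^3}\max\{{\rm uB}(T):T\text{ is a tree of order }n\}\le\frac{1}{2}$, and this half requires nothing beyond the inequality already in the excerpt.

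For the matching lower bound I would exploit that every subdivided star of order $n$ is in particular a tree of order $n$, so that for every admissible number $k$ of branches one has $\max\{{\rm uB}(T):T\text{ is a tree of order }n\}\ge\max\{{\rm uB}(S(n_1,\ldots,n_k)):1+n_1+\cdots+n_k=n\}$. The idea is to let $k$ grow with $n$ while staying $o(n)$, for instance $k=k(n)=\lceil\sqrt{n}\,\rceil$. Substituting this into the estimate $\max\{{\rm uB}(S(n_1,\ldots,n_k)):1+n_1+\cdots+n_k=n\}=\left(\frac12-\frac{5}{6k}+\frac{1}{3k^2}\right)n^3+O(kn^2)$ and dividing by $n^3$, the coefficient $\frac12-\frac{5}{6k}+\frac{1}{3k^2}$ tends to $\frac12$ because $k\to\infty$, while the normalized error is $O(k/n)=O(n^{-1/2})\to0$. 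This yields $\liminf_{n\to\infty}\frac{1}{n^3}\max\{{\rm uB}(T):T\text{ is a tree of order }n\}\ge\frac12$, and combining the two bounds proves the corollary.

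The one delicate point, which I expect to be the main thing to verify rather than a serious obstacle, is that the constant hidden in the error term $O(kn^2)$ must be independent of $k$, so that letting $k$ vary with $n$ is legitimate. I anticipate that this uniformity is exactly what the proof of the second displayed formula provides, since that error term is obtained from explicit polynomial estimates in $n_1,\ldots,n_k$. Granting such uniformity, the choice of $k(n)$ is entirely flexible: any $k(n)$ with $k(n)\to\infty$ and $k(n)=o(n)$ makes both the coefficient converge to $\frac12$ and the error vanish, so the argument is robust to the precise growth rate selected.
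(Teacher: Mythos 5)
Your proposal is correct and matches the paper's own argument: the paper also combines the trivial bound (\ref{e2}) for the upper direction with Theorem \ref{theorem1} applied at $k=\lfloor\sqrt{n}\rfloor$ for the lower direction, exactly as you do with $k=\lceil\sqrt{n}\,\rceil$. Your concern about the uniformity of the $O(kn^2)$ constant in $k$ is legitimate and is indeed settled by the explicit estimates in the proof of Lemma \ref{lemma1}, whose error bounds (of the form $n^2+n^2+\tfrac{n^2}{2}+kn^2$) have absolute constants.
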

In other words, while we do not determine the maximum considered in Corollary \ref{corollary1} exactly for every $n$, 
we at least show that it equals $\frac{n^3}{2}+o(n^3)$
by determining its leading term. 

Our main result is the following contribution to Problem 4.7 in~\cite{misp2}.
\begin{theorem}\label{theorem1}
$$\max\Big\{{\rm uB}(S(n_1,\ldots,n_k)):1+n_1+\cdots+n_k=n\Big\}
=\left(\frac{1}{2}-\frac{5}{6k}+\frac{1}{3k^2}\right)n^3+O(kn^2).$$
\end{theorem}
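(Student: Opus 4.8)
The plan is to compute ${\rm uB}(S(n_1,\ldots,n_k))$ asymptotically by exploiting the very simple distance structure of a subdivided star, and then to optimize the resulting expression over the branch lengths $n_1,\ldots,n_k$. First I would fix coordinates: write $c$ for the center and, on each branch $i$, address the vertices by their depth $1,\ldots,n_i$ from $c$. I then use the standard tree fact that the vertices closer to $u$ than to $v$ are exactly those whose projection onto the $u$--$v$ path lies on the $u$-side of the midpoint of that path. A short case analysis (on whether the two vertices lie on the same branch, and on the signs of the depth comparisons) shows that, up to an additive $O(1)$ per pair coming from discretization at the midpoint and from the few equidistant or center-incident pairs,
\[
|n_G(u,v)-n_G(v,u)|=
\begin{cases}
|\,n-2n_i+(s+t)\,|, & u,v \text{ on the same branch } i \text{ at depths } s<t,\\
|\,n-2n_j+(t-s)\,|, & u,v \text{ on different branches, with the deeper one at depth } t \text{ on branch } j.
\end{cases}
\]
The key structural feature is that only the branch length $n_j$ of the \emph{deeper} endpoint enters.

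Second, setting $x_i=n_i/n$ and rescaling the depths, each of the two sums over pairs converges to a double integral, giving ${\rm uB}(S(n_1,\ldots,n_k))=n^3\,F(x_1,\ldots,x_k)+O(kn^2)$, where $F=F_{\mathrm{same}}+F_{\mathrm{diff}}$ with
\[
F_{\mathrm{same}}=\sum_{i}\int_{0}^{x_i}\int_{0}^{\tau}|1-2x_i+\sigma+\tau|\,d\sigma\,d\tau,\qquad
F_{\mathrm{diff}}=\sum_{i\neq j}\int_{0}^{x_i}\int_{0}^{x_j}\mathbf{1}_{[\sigma<\tau]}\,|1-2x_j+\tau-\sigma|\,d\sigma\,d\tau.
\]
The $O(kn^2)$ error absorbs both the per-pair discretization discrepancy and the boundary/center pairs. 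Since a maximizer will have all branches shorter than $n/2$ (a single long branch is easily seen to be suboptimal), in the relevant regime $x_i\le\tfrac12$ all the integrands are positive, so the absolute values disappear and $F$ becomes piecewise polynomial.

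Third, for the lower bound I would evaluate $F$ at the uniform point $x_i=1/k$. A direct computation then gives $F_{\mathrm{same}}=\tfrac{1}{2k}-\tfrac{1}{2k^2}$ and $F_{\mathrm{diff}}=\tfrac12-\tfrac{4}{3k}+\tfrac{5}{6k^2}$, so that $F(1/k,\ldots,1/k)=\tfrac12-\tfrac{5}{6k}+\tfrac{1}{3k^2}$, which is precisely the claimed coefficient. Choosing integer branch lengths $n_i\in\{\lfloor (n-1)/k\rfloor,\lceil (n-1)/k\rceil\}$ perturbs each $x_i$ by $O(1/n)$ and hence, by the Lipschitz continuity of $F$, realizes this value up to the admissible $O(kn^2)$ error, giving the matching lower bound.

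The main work, and the principal obstacle, is the matching upper bound, namely that the uniform point maximizes $F$ over the simplex $\{x\ge 0:\sum_i x_i=1\}$. As $F$ is symmetric, it suffices to show that $F$ is Schur-concave, i.e.\ that equalizing any two coordinates never decreases $F$. The subtlety is that $F_{\mathrm{same}}$ \emph{alone} fails this: along a transfer $x_i=c+h,\ x_j=c-h$ its contribution has second derivative $2(1-3c)$ in $h$, which is positive for $c<\tfrac13$, so the cross-term $F_{\mathrm{diff}}$ must compensate. I would therefore evaluate $F_{\mathrm{diff}}$ explicitly; it is piecewise cubic, with one expression when $x_i\le x_j$ and another when $x_i\ge x_j$, and then verify the Schur--Ostrowski condition $(x_i-x_j)\bigl(\partial_{x_i}F-\partial_{x_j}F\bigr)\le 0$ on each smooth piece. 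Establishing this inequality cleanly, together with a direct argument that configurations having some $x_i>\tfrac12$ cannot beat the uniform value, is the technical heart of the proof; combining it with the lower bound and the $O(kn^2)$ bookkeeping of the excluded pairs then yields Theorem~\ref{theorem1}.
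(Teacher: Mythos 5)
Your setup coincides with the paper's own: your two displayed pair-contribution formulas are exactly the paper's ${\rm uB}_3$ and ${\rm uB}_4$ terms (with center-incident and equidistant pairs absorbed into the error, as the paper does via its bounds ${\rm uB}_1,{\rm uB}_2\le n^2$), your $F_{\rm same}+F_{\rm diff}$ is the paper's $\widetilde{\rm uB}_3+\widetilde{\rm uB}_4$, and your evaluation $F(1/k,\ldots,1/k)=\frac12-\frac{5}{6k}+\frac{1}{3k^2}$ is correct. The gap is precisely in the step you call the technical heart: $F$ is \emph{not} Schur-concave on the relevant domain, so the Schur--Ostrowski condition you propose to verify is false and no amount of piecewise case analysis will establish it. Concretely, on the ordered region $x_1\ge\cdots\ge x_k$ with $x_1\le\frac12$, where $F$ equals the cubic polynomial $\sum_i\bigl(\frac{x_i^2}{2}-\frac{x_i^3}{2}\bigr)+\sum_{i<j}\bigl(x_ix_j+\frac{x_ix_j^2}{2}-\frac{3x_i^2x_j}{2}-\frac{2x_j^3}{3}\bigr)$, take $k=4$ and $(x_1,x_2,x_3,x_4)=(0.45,0.45,0.1,0)$ (sum $=1$, all coordinates at most $\frac12$; perturb to strictly positive entries if you prefer). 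Then $F(0.45,0.45,0.1,0)\approx 0.1989$ while $F(0.45,0.45,0.05,0.05)\approx 0.1978$: equalizing the last two coordinates strictly \emph{decreases} $F$. The reason is visible in the exchange derivative: for adjacent coordinates one computes $\partial_{i+1}F-\partial_iF=(x_i-x_{i+1})\bigl(4x_{i+1}+3b_1-a_1+2(i-1)(x_i+x_{i+1})\bigr)$ with $a_1=\sum_{m<i}x_m$ and $b_1=\sum_{l>i+1}x_l$, and the bracket goes negative as soon as the coordinates preceding $x_i$ are much larger than $x_i$ and $x_{i+1}$. So your hoped-for compensation by $F_{\rm diff}$ does not hold pointwise, and the upper-bound argument collapses as written.

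This failure is exactly why the paper does not prove Schur-concavity. Instead, it fixes a global maximizer (compactness) and equalizes one carefully chosen pair: the \emph{smallest} index $i$ with $x_i>x_{i+1}$. That choice forces $x_1=\cdots=x_i$, i.e.\ $a_1=(i-1)x_i$, and then the bracket above simplifies to $(i-1)x_i+2(i+1)x_{i+1}+3b_1\ge 0$, so the equalizing move increases $f$, giving the contradiction; once all coordinates are equal, a one-variable monotonicity argument pins down $x=\frac1k$, and the case $x_1>\frac12$ is treated separately (as you also intend). Your proposal can be repaired by replacing ``equalize any two coordinates'' with this restricted exchange at a maximizer --- the point being that the needed inequality is only true where the coordinates above $x_i$ are all tied, which one may assume at the first descent of a maximizer --- but as stated the Schur-concavity claim is a genuine, fatal gap in the upper bound. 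The lower-bound half of your proposal (uniform branch lengths, integer rounding, Lipschitz estimate) is fine and matches the paper.
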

As discussed in \cite{krra},
distance-unbalancedness is a parameter that is hard to work with.
Many of the usual local arguments do not work well,
because its definition involves pairs of vertices at all possible distances.
In \cite{krra}, 
we overcame this difficulty by `localizing' our approach;
more precisely, 
we considered some carefully chosen alternative parameters 
that were more amenable to local arguments.
In the present paper, our approach is different;
we approximate (\ref{e1}) by smooth functions defined by suitable integrals, 
and obtain our result by solving a smooth optimization problem.
Intuitively, this approach corresponds to fixing the number $k$ of branches
and letting $n$ go to infinity.
As it turns out, cf.~Lemma \ref{lemma2} below, 
in order to maximize distance-unbalancedness in this setting, 
one should distribute the vertices as evenly to the different branches as possible,
that is, we are not able to reproduce the somewhat linear increase of the $n_i$
from small to medium values observed in the computational data,
where $k$ is allowed to grow with $n$.

All proofs are given in the next section.

\section{Proofs}

Let $n_1,\ldots,n_k$ be positive integers such that $n_1\geq n_2\geq \ldots \geq n_k$.
For $k=2$, the subdivided spider $S(n_1,n_2)$ is just a path $P_{1+n_1+n_2}$, 
whose distance-unbalancedness satisfies Theorem \ref{theorem1},
cf.~Proposition 3.3 in \cite{misp2}.
Hence, let $k\geq 3$.
Let $n=1+n_1+\cdots +n_k$, and let $S=S(n_1,\ldots,n_k)$.

The distance-unbalancedness ${\rm uB}(S)$ of $S$ satisfies
\begin{eqnarray*}
{\rm uB}(n_1,\ldots,n_k) & = & 
{\rm uB}_1(n_1,\ldots,n_k)
+{\rm uB}_2(n_1,\ldots,n_k)
+{\rm uB}_3(n_1,\ldots,n_k)
+{\rm uB}_4(n_1,\ldots,n_k),\mbox{ where}\\
{\rm uB}_1(n_1,\ldots,n_k) & = & 
\sum_{i=1}^k\sum_{d=1}^{n_i}|n-2n_i-1+d|,\\
{\rm uB}_2(n_1,\ldots,n_k) & = & 
\sum_{i=1}^{k-1}\sum_{j=i+1}^k(n_i-n_j)n_j,\\
{\rm uB}_3(n_1,\ldots,n_k) & = & 
\sum_{i=1}^k\sum_{d=1}^{n_i-1}\sum_{d'=1}^{n_i-d}|n-2n_i-1+2d+d'|,\mbox{ and}\\
{\rm uB}_4(n_1,\ldots,n_k) & = &
\sum_{i=1}^{k-1}\sum_{j=i+1}^k
\left(
\sum_{d_j=1}^{\min\{ n_j,n_i-1\}}\sum_{d_i=d_j+1}^{n_i}|n-2n_i-1+d_i-d_j|
+
\sum_{d_i=1}^{n_j-1}\sum_{d_j=d_i+1}^{n_j}|n-2n_j-1+d_j-d_i|
\right). 
\end{eqnarray*}
The four terms above collect the contribution to ${\rm uB}(S)$ 
generated by different types of pairs $\{ u,v\}$ of distinct vertices:
\begin{itemize}
\item The term $|n-2n_i-1+d|$ in ${\rm uB}_1(n_1,\ldots,n_k)$
is the contribution generated by a pair containing
the center vertex $c$ of $S$ and a vertex at distance $d$ from $c$ in the $i$th branch.
\item The term $(n_i-n_j)n_j$ in ${\rm uB}_2(n_1,\ldots,n_k)$
is the contribution generated by all pairs containing
one vertex in the $i$th branch 
and one vertex in the $j$th branch 
both at the same distance from $c$.
Note that there are $n_j$ choices for this distance.
\item The term $|n-2n_i-1+2d+d'|$ in ${\rm uB}_3(n_1,\ldots,n_k)$
is the contribution generated by a pair containing
two vertices in the $i$th branch,
one at distance $d$ and one at distance $d+d'$ from $c$.
\item The two terms $|n-2n_i-1+d_i-d_j|$ and $|n-2n_j-1+d_j-d_i|$
in ${\rm uB}_4(n_1,\ldots,n_k)$
are the contributions generated by pairs containing
one vertex in the $i$th branch at distance $d_i$ from $c$
and one vertex in the $j$th branch at distance $d_j$ from $c$
with $d_i>d_j$ for the first term and $d_i<d_j$ for the second term, respectively.
\end{itemize}
For $x_1,\ldots,x_k\in [0,1]$ 
with $x_1\geq x_2\geq \ldots \geq x_k$,
we consider the following two integrals as continuous variants of 
${\rm uB}_3$ and ${\rm uB}_4$,
where $x_i$ corresponds to $\frac{n_i}{n}$:
\begin{eqnarray*}
\widetilde{\rm uB}_3(x_1,\ldots,x_k) & = & 
\sum_{i=1}^k
\int_0^{x_i}\int_0^{x_i-y}|1-2x_i+2y+y'| dy' dy\mbox{ and}\\
\widetilde{\rm uB}_4(x_1,\ldots,x_k) & = & 
\sum_{i=1}^{k-1}\sum_{j=i+1}^k
\left(
\int_0^{x_j}\int_{y_j}^{x_i}|1-2x_i+y_i-y_j|dy_i dy_j
+
\int_0^{x_j}\int_{y_i}^{x_j}|1-2x_j+y_j-y_i|dy_j dy_i
\right). 
\end{eqnarray*}
Our first lemma quantifies in which sense these expressions approximate ${\rm uB}(S)$.

\begin{lemma}\label{lemma1}
If $x_i=\frac{n_i}{n}$ for every $i\in \{ 1,\ldots,k\}$, then
$$
\left|{\rm uB}(S)-n^3\left(\widetilde{\rm uB}_3(x_1,\ldots,x_k)+\widetilde{\rm uB}_4(x_1,\ldots,x_k)\right)\right|\leq O(kn^2).$$
\end{lemma}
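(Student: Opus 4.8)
The plan is to discard ${\rm uB}_1$ and ${\rm uB}_2$ as lower-order terms and to show that ${\rm uB}_3$ and ${\rm uB}_4$ are approximated by $n^3\widetilde{\rm uB}_3$ and $n^3\widetilde{\rm uB}_4$ up to an error of order $O(kn^2)$. For the first two terms crude bounds suffice. In ${\rm uB}_1$ each summand $|n-2n_i-1+d|$ is at most $n$ and there are $\sum_i n_i=n-1$ of them, so ${\rm uB}_1=O(n^2)$. For ${\rm uB}_2$ the ordering $n_1\ge\dots\ge n_k$ gives $0\le(n_i-n_j)n_j\le n_in_j$, hence ${\rm uB}_2\le\sum_{i<j}n_in_j\le\frac{1}{2}\big(\sum_i n_i\big)^2=O(n^2)$. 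Both fit inside the allowed error.

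The core is a Riemann-sum comparison obtained by rescaling $y=s/n$, $y'=t/n$ (and analogously $y_i,y_j$), which blows up each continuous domain by a factor $n$. For a single branch this turns the continuous term into
$$n^3\int_0^{x_i}\!\!\int_0^{x_i-y}\!\!|1-2x_i+2y+y'|\,dy'\,dy=\int_0^{n_i}\!\!\int_0^{n_i-s}\!\!|n-2n_i+2s+t|\,dt\,ds,$$
which I would compare cell by cell with the lattice sum ${\rm uB}_3^{(i)}=\sum_{d,d'}|n-2n_i-1+2d+d'|$, associating to each lattice point $p$ the unit cell $C$ it anchors. The integrand $g(s,t)=|n-2n_i+2s+t|$ is Lipschitz with constant $O(1)$, so on any interior cell $\big|\int_C g-g(p)\big|=O(1)$, and over the $O(n_i^2)$ interior cells this is $O(n_i^2)$. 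The $O(n_i)$ cells meeting the boundary of the triangle, where $g$ may reach $\Theta(n)$, add $O(n\,n_i)$. Summing over branches yields $O\big(\sum_i n_i^2\big)+O\big(n\sum_i n_i\big)=O(n^2)$, and the harmless $-1$ shift in the discrete integrand alters each of the $O(n_i^2)$ terms by at most $1$, so $|{\rm uB}_3-n^3\widetilde{\rm uB}_3|=O(n^2)$.

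The same rescaling handles each pair $(i,j)$ in ${\rm uB}_4$: the two double integrals become lattice sums over triangular or trapezoidal regions of area $O(n_in_j)$ with an $O(1)$-Lipschitz integrand, and the interior contribution is $O(n_in_j)$, summing to $O\big(\sum_{i<j}n_in_j\big)=O(n^2)$. The delicate point, which I expect to be the main obstacle, is the boundary: the discrete and continuous regions differ slightly (for instance the summation bound $\min\{n_j,n_i-1\}$ versus $x_j$), and since the integrand can be as large as $\Theta(n)$, each of the $O(n_i+n_j)$ cells straddling the boundary costs $O(n)$ rather than $O(1)$. Bounding this pairwise boundary error by $O\big(n(n_i+n_j)\big)$ and summing, each index occurs in $k-1$ pairs, so $\sum_{i<j}(n_i+n_j)=(k-1)\sum_i n_i=O(kn)$, giving a total boundary error $O(kn^2)$. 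This is exactly where the factor $k$ in the statement comes from.

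Finally, the triangle inequality gives
$$\Big|{\rm uB}(S)-n^3\big(\widetilde{\rm uB}_3+\widetilde{\rm uB}_4\big)\Big|\le {\rm uB}_1+{\rm uB}_2+\big|{\rm uB}_3-n^3\widetilde{\rm uB}_3\big|+\big|{\rm uB}_4-n^3\widetilde{\rm uB}_4\big|=O(kn^2),$$
as claimed. All of this is routine once the lattice-versus-integral estimate is set up; the only part requiring genuine care is the boundary bookkeeping for ${\rm uB}_4$.
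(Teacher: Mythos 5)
Your proposal is correct, and it reaches the bound by a genuinely different route than the paper. The paper evaluates both the discrete sums and the corresponding integrals in closed form and subtracts: in the case $2n_1\leq n$ all absolute values resolve trivially and the per-branch difference for ${\rm uB}_3$ comes out exactly as $\tfrac{n^2x_i(1-x_i)}{2}$ and the per-pair difference for ${\rm uB}_4$ as $x_jn\big((1-x_j)n-\tfrac{2}{3}\big)$, while the case $2n_1>n$ requires splitting every sum and integral involving the first branch at the kink of $|\cdot|$, a computation relegated to the appendix. You instead rescale the integrals to the lattice scale and invoke a generic Riemann-sum estimate: since $|\cdot|$ composed with an affine function is Lipschitz with constant $O(1)$, each interior unit cell contributes $O(1)$ error regardless of where the kink sits, so no case distinction on $2n_1$ versus $n$ is needed at all; the boundary cells, where only the sup-norm bound $O(n)$ is available, are counted separately. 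Your accounting is sound throughout: interior errors total $O\big(\sum_i n_i^2\big)+O\big(\sum_{i<j}n_in_j\big)=O(n^2)$, and the boundary errors for ${\rm uB}_4$ total $O(n)\cdot\sum_{i<j}(n_i+n_j)=O(n)\cdot(k-1)(n-1)=O(kn^2)$, which is indeed the same source of the factor $k$ as in the paper (there it appears as $\sum_{i<j}x_jn\cdot(1-x_j)n\leq kn^2$). What the paper's exact computation buys is explicit constants (roughly $(k+\tfrac{5}{2})n^2$ in the first case); what your argument buys is uniformity and robustness — it avoids the appendix entirely and would work verbatim for any piecewise-affine or Lipschitz integrand. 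The only thing to tighten in a final write-up is the cell bookkeeping itself (specifying which unit cell each lattice point anchors, and verifying the union of cells sits inside the rescaled region with uncovered area $O(n_i)$ along the hypotenuse), but that is routine and does not hide any difficulty.
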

\begin{proof}
Note that
\begin{eqnarray}
{\rm uB}_1(n_1,\ldots,n_k) 
& = & \sum_{i=1}^k\sum_{d=1}^{n_i}|n-2n_i-1+d|
\leq \sum_{i=1}^knn_i
\leq n^2\mbox{ and}\label{esmall1}\\
{\rm uB}_2(n_1,\ldots,n_k) 
& = & \sum_{i=1}^{k-1}\sum_{j=i+1}^k(n_i-n_j)n_j
\leq \sum_{i=1}^{k-1}n_i\sum_{j=i+1}^kn_j
\leq \sum_{i=1}^{k-1}n_in
\leq n^2,\label{esmall2}
\end{eqnarray}
Note that $x_1+\cdots+x_k=\frac{n-1}{n}<1$.

First, we consider the case $2n_1\leq n$, which implies $x_1\leq \frac{1}{2}$.
Note that, in this case, 
all absolute values ``$|\cdot|$'' in the above expressions are redundant.

Since
\begin{eqnarray*}
&& \left|\sum_{d=1}^{n_i-1}\sum_{d'=1}^{n_i-d}(n-2n_i-1+2d+d')
-n^3\int_0^{x_i}\int_0^{x_i-y}(1-2x_i+2y+y' ) dy' dy\right|\\
&=& \left|\sum_{d=1}^{x_in-1}\sum_{d'=1}^{x_in-d}(n-2x_in-1+2d+d')
-n^3\int_0^{x_i}\int_0^{x_i-y}(1-2x_i+2y+y' ) dy' dy\right|\\
&=& \left|\frac{n^2x_i(1-x_i)(x_in-1)}{2}-\frac{n^3x_i^2(1-x_i)}{2}\right|\\
&=& \frac{n^2x_i(1-x_i)}{2},
\end{eqnarray*}
we obtain
\begin{eqnarray*}
\left|{\rm uB}_3(n_1,\ldots,n_k)-n^3\widetilde{\rm uB}_3(x_1,\ldots,x_k)\right|
&\leq & \sum_{i=1}^k\frac{n^2x_i(1-x_i)}{2}
\leq \frac{n^2}{2}\sum_{i=1}^kx_i
\leq \frac{n^2}{2}.
\end{eqnarray*}
Similarly, since
\begin{eqnarray*}
&& \Bigg|\left(\sum_{d_j=1}^{\min\{ n_j,n_i-1\}}\sum_{d_i=d_j+1}^{n_i}(n-2n_i-1+d_i-d_j)
+
\sum_{d_i=1}^{n_j-1}\sum_{d_j=d_i+1}^{n_j}(n-2n_j-1+d_j-d_i)\right)\\
&& \,\,\,\,\,\,\,\,\,\,\,\,\,\,\,\,\,\,- n^3\left(
\int_0^{x_j}\int_{y_j}^{x_i}(1-2x_i+y_i-y_j)dy_i dy_j
+
\int_0^{x_j}\int_{y_i}^{x_j}(1-2x_j+y_j-y_i)dy_j dy_i
\right)\Bigg|\\
&=& 
\left|
\frac{1}{6}x_jn\left(
3x_ix_jn^2
+6x_in^2
+6x_jn
+4
-9x_i^2n^2
-4x_j^2n^2
-6n
\right)
-n^3\left(\frac{x_ix_j(x_j+2)}{2}-\frac{3x_i^2x_j}{2}-\frac{2x_j^3}{3}\right)
\right|\\
& = & x_jn\left((1-x_j)n-\frac{2}{3}\right),
\end{eqnarray*}
we obtain
\begin{eqnarray*}
\left|{\rm uB}_4(n_1,\ldots,n_k)-n^3\widetilde{\rm uB}_4(x_1,\ldots,x_k)\right|
&\leq & \sum_{i=1}^{k-1}\sum_{j=i+1}^kx_jn\left((1-x_j)n-\frac{2}{3}\right)
\leq n^2\sum_{i=1}^{k-1}\sum_{j=i+1}^kx_j
=n^2\sum_{i=1}^k(i-1)x_i
\leq kn^2.
\end{eqnarray*}
Now, using the above estimates, (\ref{esmall1}), and (\ref{esmall2}),
\begin{eqnarray*}
&&\left|{\rm uB}(S)-n^3\left(\widetilde{\rm uB}_3(x_1,\ldots,x_k)+\widetilde{\rm uB}_4(x_1,\ldots,x_k)\right)\right|\\
&\leq &
{\rm uB}_1(n_1,\ldots,n_k)+{\rm uB}_2(n_1,\ldots,n_k)\\
&&+\left|{\rm uB}_3(n_1,\ldots,n_k)-n^3\widetilde{\rm uB}_3(x_1,\ldots,x_k)\right|
+\left|{\rm uB}_4(n_1,\ldots,n_k)-n^3\widetilde{\rm uB}_4(x_1,\ldots,x_k)\right|\\
&\leq & n^2+n^2+\frac{n^2}{2}+kn^2.
\end{eqnarray*}
Next, we consider the case $2n_1>n$, which implies $x_1>\frac{1}{2}$.
Since $n_1+\cdots+n_k=n-1$, we still have $2n_i<n$ and $x_i<\frac{1}{2}$ 
for $i\geq 2$.
Correctly resolving 
the absolute values ``$|\cdot|$'' in the expressions involving $n_1$ and $x_1$,
and adapting the above calculations in a straightforward way,
allows to complete the proof.
For details, we refer to the appendix.%~\cite{kr}.
\end{proof}
Our next lemma concerns the smooth optimization problem
mentioned in the introduction.

Let 
$$f(x_1,\ldots,x_k)=\widetilde{\rm uB}_3(x_1,\ldots,x_k)+\widetilde{\rm uB}_4(x_1,\ldots,x_k).$$
\begin{lemma}\label{lemma2}
For $x_1,\ldots,x_k\in [0,1]$
with $x_1\geq x_2\geq \ldots \geq x_k$ and $x_1+\cdots+x_k\leq 1$, we have
$$f(x_1,\ldots,x_k)\leq f\left(\frac{1}{k},\ldots,\frac{1}{k}\right)=\frac{1}{2}-\frac{5}{6k}+\frac{1}{3k^2}.$$
\end{lemma}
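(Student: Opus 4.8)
The plan is to locate the maximizer by a Karush--Kuhn--Tucker analysis on the compact domain $\{x_1\ge\cdots\ge x_k\ge 0,\ x_1+\cdots+x_k\le 1\}$, exploiting that the smooth pieces of $f$ are explicit cubic polynomials. First I would record that on the sub-region $x_1\le\tfrac12$ (so every $x_i\le\tfrac12$) all the absolute values in $\widetilde{\rm uB}_3$ and $\widetilde{\rm uB}_4$ are redundant, so by the elementary integrals already evaluated for Lemma~\ref{lemma1} one has $f(x)=\sum_{i=1}^k\tfrac12 x_i^2(1-x_i)+\sum_{i<j}H(x_i,x_j)$, where $H(a,b)=ab-\tfrac32 a^2b+\tfrac12 ab^2-\tfrac23 b^3$; here $f$ is a genuine polynomial and is smooth. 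The remaining region $x_1>\tfrac12$ I would dispose of separately, exactly as the case $2n_1>n$ is treated for Lemma~\ref{lemma1}: there the star is almost a path and $f$ stays below $C_k:=\tfrac12-\tfrac{5}{6k}+\tfrac{1}{3k^2}$, and this is the technical, lower-order part I expect to be tedious rather than conceptually hard.

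Next I would rule out vanishing coordinates by induction on $k$. Since $x_k=0$ kills $g(x_k)$ and every $H(x_i,0)$, a point with a zero coordinate has the same $f$-value as a $(k-1)$-branch configuration, whose value is at most $C_{k-1}$ by the induction hypothesis; as $C_k$ is strictly increasing in $k$ and is attained at $(\tfrac1k,\dots,\tfrac1k)$, no maximizer can have a zero coordinate. Thus a maximizer with $x_1\le\tfrac12$ has all coordinates positive, and I may apply the stationarity conditions with multipliers $\mu_0\ge0$ for $\sum_i x_i\le 1$ and $\nu_m\ge0$ for $x_{m+1}-x_m\le 0$, which read $\partial_{x_\ell}f=\mu_0+\nu_{\ell-1}-\nu_\ell$, so that $\partial_{x_m}f-\partial_{x_{m+1}}f=\nu_{m-1}-2\nu_m+\nu_{m+1}$.

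The heart of the argument is a direct computation of the consecutive gradient difference, which I expect to factor cleanly as
$$\partial_{x_m}f-\partial_{x_{m+1}}f=(x_m-x_{m+1})\,B_m,\qquad B_m=4P_{m-1}-3p_1+(5-2m)x_m+(1-2m)x_{m+1},$$
where $P_{m-1}=x_1+\cdots+x_{m-1}$ and $p_1=\sum_i x_i$. Suppose a maximizer is not balanced and let $m$ be the first index with $x_m>x_{m+1}$, so that $x_1=\cdots=x_m=:v>0$. Complementary slackness gives $\nu_m=0$, whence $\partial_{x_m}f-\partial_{x_{m+1}}f=\nu_{m-1}+\nu_{m+1}\ge 0$. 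On the other hand $P_{m-1}=(m-1)v$ and the budget gives $p_1\ge mv$, so $B_m\le (2m+1)v-3mv+(1-2m)x_{m+1}=(1-m)v+(1-2m)x_{m+1}<0$, using $v>0$ and $x_{m+1}>0$. As $x_m-x_{m+1}>0$, this forces $\partial_{x_m}f-\partial_{x_{m+1}}f<0$, a contradiction. Hence every maximizer with $x_1\le\tfrac12$ is balanced, and it remains only to maximize $f(v,\dots,v)=\tfrac{k^2}{2}v^2-\tfrac{k(5k-2)}{6}v^3$ over $v\in[0,\tfrac1k]$, which is increasing there and attains $C_k$ at $v=\tfrac1k$.

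The step I expect to be genuinely delicate is isolating this Karush--Kuhn--Tucker argument in the first place. The function $f$ is neither concave nor Schur-concave on the ordered domain: one can exhibit (nearly degenerate) configurations at which a balanced pair of coordinates is a \emph{local minimum} of $f$ along the balancing direction, so the usual smoothing and majorization reasoning fails outright. What rescues the proof is that at the \emph{first} strict descent the coordinates $x_1,\dots,x_m$ are all equal to $v$, and there the budget inequality $p_1\ge mv$ makes the single relevant test quantity $B_m$ negative; this is precisely what pins the optimum to the symmetric point and is, I believe, the only subtle point beyond the routine (if lengthy) verification of the factorization of $\partial_{x_m}f-\partial_{x_{m+1}}f$ and the separate treatment of $x_1>\tfrac12$.
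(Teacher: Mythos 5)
Your treatment of the main case $x_1\le\frac12$ is correct and is, at bottom, the same argument as the paper's. Your factorization $\partial_{x_m}f-\partial_{x_{m+1}}f=(x_m-x_{m+1})B_m$ with $B_m=4P_{m-1}-3p_1+(5-2m)x_m+(1-2m)x_{m+1}$ (where $P_{m-1}=x_1+\cdots+x_{m-1}$ and $p_1=x_1+\cdots+x_k$) checks out, and at the first strict descent, where $x_1=\cdots=x_m=v$, it becomes $B_m=-\left((m-1)v+2(m+1)x_{m+1}+3\sum_{j\ge m+2}x_j\right)$, which is exactly the negative of the first-order coefficient in the paper's perturbation $x_m\mapsto x_m-\epsilon$, $x_{m+1}\mapsto x_{m+1}+\epsilon$; your KKT/complementary-slackness step ($\nu_m=0$, hence $\partial_{x_m}f-\partial_{x_{m+1}}f=\nu_{m-1}+\nu_{m+1}\ge0$) is a multiplier-language restatement of ``no feasible first-order improvement at a maximizer,'' which is what the paper uses directly. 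The concluding one-variable step, that $f(v,\ldots,v)=\frac{k^2}{2}v^2-\frac{k(5k-2)}{6}v^3$ is increasing on $[0,\frac1k]$, is identical to the paper's. Your remark that $f$ fails to be Schur-concave is also correct (for instance $B_2=x_1-2x_2-6x_3>0$ at $(\frac12,b,b)$ with $b<\frac1{16}$), and both proofs indeed hinge on taking the \emph{first} descent index, so that $P_{m-1}=(m-1)v$ is as small as possible.

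Two points of difference. Your induction on $k$ ruling out zero coordinates is a small but genuine improvement over the paper: in the degenerate case $m=1$, $x_2=0$, the paper's coefficient $(i-1)x_i+2(i+1)x_{i+1}+3b_1$ vanishes, so its claimed strict increase fails at points of the form $(x_1,0,\ldots,0)$; the paper passes over this silently, whereas your induction (or simply $f(x_1,0,\ldots,0)\le\frac1{16}<\frac12-\frac5{6k}+\frac1{3k^2}$ for $k\ge3$) closes it. On the other side, the one real gap in your proposal is the region $x_1>\frac12$. It cannot be dispatched ``exactly as the case $2n_1>n$ in Lemma \ref{lemma1}'': there that case is an approximation-error estimate, whereas here one must re-solve the constrained optimization with the absolute values resolved differently in every term containing $x_1$. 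In the paper's appendix this requires a separate balancing argument forcing $x_2=\cdots=x_k=y$, a monotonicity argument forcing $y=\frac{1-x_1}{k-1}$, and a one-variable analysis of the resulting cubic on $x_1\in[\frac12,1]$, whose maximum is $\frac{(3k-2)(2k-3)}{24(k-1)^2}<\frac12-\frac5{6k}+\frac1{3k^2}$. Your assertion about that region is true, but as written it is assumed rather than proved, and what is needed there is a case analysis parallel to your main argument, not lower-order bookkeeping.
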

\begin{proof}
Let $(x_1,\ldots,x_k)$ maximize the continuous function $f(x_1,\ldots,x_k)$
subject to the constraints
$x_1,\ldots,x_k\in [0,1]$
with $x_1\geq x_2\geq \ldots \geq x_k$ and $x_1+\cdots+x_k\leq 1$,
which define a compact subset of $\mathbb{R}^k$.
Our goal is to show that each $x_i$ equals $\frac{1}{k}$.
Similarly as in the proof of Lemma \ref{lemma1},
we first consider the case that $x_1\leq \frac{1}{2}$;
in view of the desired result, this is actually the more relevant case
leading to the maximum value of $f$.
In this case, we have
\begin{eqnarray*}
f(x_1,\ldots,x_k)&=&\widetilde{\rm uB}_3(x_1,\ldots,x_k)+\widetilde{\rm uB}_4(x_1,\ldots,x_k)\\
&=&\sum_{i=1}^k
\int_0^{x_i}\int_0^{x_i-y}(1-2x_i+2y+y' ) dy' dy\\
&&+\sum_{i=1}^{k-1}\sum_{j=i+1}^k
\left(
\int_0^{x_j}\int_{y_j}^{x_i}(1-2x_i+y_i-y_j)dy_i dy_j
+
\int_0^{x_j}\int_{y_i}^{x_j}(1-2x_j+y_j-y_i)dy_j dy_i
\right)\\
&=&\sum_{i=1}^k\left(\frac{x_i^2}{2}-\frac{x_i^3}{2}\right)
+\sum_{i=1}^{k-1}\sum_{j=i+1}^k
\left(x_ix_j+\frac{x_ix_j^2}{2}-\frac{3x_i^2x_j}{2}-\frac{2x_j^3}{3}
\right).
\end{eqnarray*}
First, suppose, for a contradiction, that the $x_i$ are not all equal,
and that $i\in \{ 1,\ldots,k-1\}$ is the smallest index with $x_i>x_{i+1}$.
For a suitable function $g$, we have
\begin{eqnarray*}
f(x_1,\ldots,x_k) & = & g(x_1,\ldots,x_{i-1},x_{i+2},\ldots,x_k)\\
&&+\frac{1}{2}x_i^2
-\frac{1}{2}x_i^3\\
&&
+\frac{1}{2}x_{i+1}^2
-\frac{1}{2}x_{i+1}^3\\
&&+a_1x_i+\frac{1}{2}a_1x_i^2-\frac{3}{2}a_2x_i-\frac{2}{3}(i-1)x_i^3
+b_1x_i+\frac{1}{2}b_2x_i-\frac{3}{2}b_1x_i^2-\frac{2}{3}b_3\\
&&+a_1x_{i+1}+\frac{1}{2}a_1x_{i+1}^2-\frac{3}{2}a_2x_{i+1}-\frac{2}{3}(i-1)x_{i+1}^3
+b_1x_{i+1}+\frac{1}{2}b_2x_{i+1}-\frac{3}{2}b_1x_{i+1}^2-\frac{2}{3}b_3\\
&&+x_ix_{i+1}+\frac{x_ix_{i+1}^2}{2}-\frac{3x_i^2x_{i+1}}{2}-\frac{2x_{i+1}^3}{3},
\end{eqnarray*}
where
$$a_\ell = \sum_{j=1}^{i-1}x_j^\ell\,\,\,\,\,\,\mbox{ and }\,\,\,\,\,\,
b_\ell = \sum_{j=i+2}^kx_j^\ell.$$
By the choice of $i$, we have $a_\ell=(i-1)x_i^\ell$,
and it follows that
\begin{eqnarray*}
f(x_1,\ldots,x_i-\epsilon,x_{i+1}+\epsilon,\ldots,x_k)-f(x_1,\ldots,x_k)
&=& (x_i-x_{i+1})\Big((i-1)x_i+2(i+1)x_{i+1}+3b_1\Big)\epsilon+O(\epsilon^2),
\end{eqnarray*}
which implies the contradiction
$$f(x_1,\ldots,x_i-\epsilon,x_{i+1}+\epsilon,\ldots,x_k)>f(x_1,\ldots,x_k)$$
for sufficiently small $\epsilon>0$.
Hence, all $x_i$ are equal to some $x$ with $0\leq x\leq \frac{1}{k}\leq \frac{1}{3}$,
which implies 
$$
f(x_1,\ldots,x_k)
=k\left(\frac{x^2}{2}-\frac{x^3}{2}\right)
+{k\choose 2}\left(x^2+\frac{x^3}{2}-\frac{3x^3}{2}-\frac{2x^3}{3}\right)
=\frac{k^2(3x^2-5x^3)}{6}+\frac{kx^3}{3}.$$
Since the last expression is increasing in $x$ for $x<\frac{2}{5}$, 
it follows that $x=\frac{1}{k}$, 
which completes the proof in this case.

Now, we consider the case $x_1\geq\frac{1}{2}$.
Correctly resolving 
the absolute values ``$|\cdot|$'' in the expressions involving $x_1$,
and adapting the above calculations in a straightforward way,
we obtain that $x_1=\frac{1}{2}$
and $x_2=\ldots=x_k=\frac{1}{2(k-1)}$,
that is, we return to the above case $x_1\leq\frac{1}{2}$,
which completes the proof.
For details, we refer to the appendix.%~\cite{kr}.
\end{proof}
Lemma \ref{lemma1} and Lemma \ref{lemma2} together 
immediately imply Theorem \ref{theorem1}, 
and choosing $k=\lfloor\sqrt{n}\rfloor$ in Theorem \ref{theorem1}
immediately yields Corollary \ref{corollary1}.

\pagebreak

\begin{center}
{\bf Appendix}
\end{center}
{\it Proof of Lemma \ref{lemma1}.}
We give details for the case $2n_1>n$.

Note that 
$x_1>\frac{1}{2}$,
$n_2+\cdots +n_k<\frac{n}{2}$ 
and 
$x_2+\cdots+x_k<\frac{1}{2}$.

We have
\begin{eqnarray*}
&&\sum_{d=1}^{n_1-1}\sum_{d'=1}^{n_1-d}|n-2n_1-1+2d+d'|\\
& = & 
\sum_{d=1}^{\left\lfloor n_1-\frac{n}{2}\right\rfloor}\left(\sum_{d'=1}^{2n_1+1-n-2d}\Big((2n_1+1-n)-(2d+d')\Big)
+
\sum_{d'=2n_1+1-n-2d+1}^{n_1-d}\Big(-(2n_1+1-n)+(2d+d')\Big)\right)\\
&&+\sum_{d=\left\lfloor n_1-\frac{n}{2}\right\rfloor+1}^{n_1-1}
\sum_{d'=1}^{n_1-d}\Big(-(2n_1+1-n)+(2d+d')\Big)\\
& = & 
\left(\frac{5x_1^3}{6}-\frac{3x_1^2}{2}+x_1-\frac{1}{6}\right)n^3
+\left(-\frac{x_1^2}{2}+\frac{x_1}{2}-\frac{1}{4}\right)n^2
+\left(-\frac{x_1}{3}+\frac{1}{6}\right)n
+\left\{
\begin{array}{ll}
0, & \mbox{ $n$ even, and}\\
\frac{1}{4}, & \mbox{ $n$ odd.}
\end{array}\right.
\end{eqnarray*}
and 
\begin{eqnarray*}
&& \int_0^{x_1}\int_0^{x_1-y}|1-2x_1+2y+y'| dy' dy\\
&=& 
\int_0^{x_1-\frac{1}{2}}\left(
\int_0^{2x_1-1-2y}\Big((2x_1-1)-(2y+y')|\Big)dy' dy
+
\int_{2x_1-1-2y}^{x_1-y}\Big(-(2x_1-1)+(2y+y')|\Big)dy' dy\right)\\
&&+
\int_{x_1-\frac{1}{2}}^{x_1}
\int_{0}^{x_1-y}\Big(-(2x_1-1)+(2y+y')|\Big)dy' dy\\
&=& \frac{5x_1^3}{6}-\frac{3x_1^2}{2}+x_1-\frac{1}{6}.
\end{eqnarray*}
This allows to deduce
\begin{eqnarray*}
&& \left|\sum_{d=1}^{n_1-1}\sum_{d'=1}^{n_1-d}|n-2n_1-1+2d+d'|
-n^3\int_0^{x_1}\int_0^{x_1-y}|1-2x_1+2y+y'| dy' dy\right|
\leq \frac{n^2+1}{4},
\end{eqnarray*}
which implies
\begin{eqnarray*}
\left|{\rm uB}_3(n_1,\ldots,n_k)-n^3\widetilde{\rm uB}_3(x_1,\ldots,x_k)\right|
&\leq & \frac{n^2+1}{4}+\sum_{i=2}^k\frac{n^2x_i(1-x_i)}{2}\\
&\leq &\frac{n^2+1}{4}+\frac{n^2}{2}\sum_{i=2}^kx_i\\
&\leq &\frac{2n^2+1}{4}.
\end{eqnarray*}
Similarly, we have
\begin{eqnarray*}
&&\sum_{d_j=1}^{n_j}\sum_{d_1=d_j+1}^{n_1}|n-2n_1-1+d_1-d_j|
+
\sum_{d_1=1}^{n_j-1}\sum_{d_j=d_1+1}^{n_j}|n-2n_j-1+d_j-d_1|\\
&=&
\sum_{d_j=1}^{n_j}
\left(\sum_{d_1=d_j+1}^{2n_1+1-n+d_j}\Big((2n_1+1-n)-(d_1-d_j)\Big)
+
\sum_{d_1=2n_1+1-n+d_j+1}^{n_1}\Big(-(2n_1+1-n)+(d_1-d_j)\Big)\right)\\
&&+
\sum_{d_1=1}^{n_j-1}\sum_{d_j=d_1+1}^{n_j}(n-2n_j-1+d_j-d_1)\\
&=&
\left(\frac{5}{2}x_1^2x_j
+\frac{1}{2}x_1x_j^2
-3x_1x_j
+x_j
-\frac{2}{3}x_j^3\right)n^3
+
\left(
2x_1n
+x_jn
-2n
+\frac{2}{3}\right)x_jn
\end{eqnarray*}
and
\begin{eqnarray*}
&&\int_0^{x_j}\int_{y_j}^{x_1}|1-2x_1+y_1-y_j|dy_1 dy_j
+
\int_0^{x_j}\int_{y_1}^{x_j}|1-2x_j+y_j-y_1|dy_j dy_1\\
&=& 
\int_0^{x_j}
\left(
\int_{y_j}^{2x_1-1+y_j}\Big((2x_1-1)-(y_1-y_j)\Big)dy_i dy_j
+
\int_{2x_1-1+y_j}^{x_1}\Big(-(2x_1-1)+(y_1-y_j)\Big)dy_i dy_j
\right)\\
&&+\int_0^{x_j}\int_{y_1}^{x_j}(1-2x_j+y_j-y_1)dy_j dy_1\\
&=& 
\frac{5}{2}x_1^2x_j
+\frac{1}{2}x_1x_j^2
-3x_1x_j
+x_j
-\frac{2}{3}x_j^3.
\end{eqnarray*}
This implies
\begin{eqnarray*}
&& \sum_{j=2}^k
\Bigg|
\left(\sum_{d_j=1}^{n_j}\sum_{d_1=d_j+1}^{n_1}|n-2n_1-1+d_1-d_j|
+
\sum_{d_1=1}^{n_j-1}\sum_{d_j=d_1+1}^{n_j}|n-2n_j-1+d_j-d_1|\right)\\
&&\,\,\,\,\,\,\,\,\,\,
-\left(
\int_0^{x_j}\int_{y_j}^{x_1}|1-2x_1+y_1-y_j|dy_1 dy_j
+
\int_0^{x_j}\int_{y_1}^{x_j}|1-2x_j+y_j-y_1|dy_j dy_1
\right)
\Bigg|\\
&=& \sum_{j=2}^k 2nx_j\left(\left(1-x_1-\frac{x_j}{2}\right)n+\frac{1}{3}\right)\\
&\leq & 2n\left(n+\frac{1}{3}\right)\sum_{j=2}^kx_j\\
&\leq & n\left(n+\frac{1}{3}\right),
\end{eqnarray*}
and, hence,
\begin{eqnarray*}
\left|{\rm uB}_4(n_1,\ldots,n_k)-n^3\widetilde{\rm uB}_4(x_1,\ldots,x_k)\right|
&\leq & n\left(n+\frac{1}{3}\right)+\sum_{i=2}^{k-1}\sum_{j=i+1}^kx_jn\left((1-x_j)n-\frac{2}{3}\right)\\
&\leq & n\left(n+\frac{1}{3}\right)+n^2\sum_{i=2}^{k-1}\sum_{j=i+1}^kx_j\\
&=& n\left(n+\frac{1}{3}\right)+n^2\sum_{i=2}^k(i-2)x_i\\
& \leq & n\left(n+\frac{1}{3}\right)+\frac{k}{2}n^2.
\end{eqnarray*}
Now, using the above estimates together with (\ref{esmall1}) and (\ref{esmall2}),
we obtain
\begin{eqnarray*}
&&\left|{\rm uB}(S)-n^3\left(\widetilde{\rm uB}_3(x_1,\ldots,x_k)+\widetilde{\rm uB}_4(x_1,\ldots,x_k)\right)\right|\\
&\leq &
{\rm uB}_1(n_1,\ldots,n_k)+{\rm uB}_2(n_1,\ldots,n_k)\\
&&+\left|{\rm uB}_3(n_1,\ldots,n_k)-n^3\widetilde{\rm uB}_3(x_1,\ldots,x_k)\right|
+\left|{\rm uB}_4(n_1,\ldots,n_k)-n^3\widetilde{\rm uB}_4(x_1,\ldots,x_k)\right|\\
&\leq & n^2+n^2+\frac{2n^2+1}{4}+n\left(n+\frac{1}{3}\right)+\frac{k}{2}n^2\\
& \leq & O(kn^2),
\end{eqnarray*}
which completes the proof.
$\Box$

\pagebreak

\noindent {\it Proof of Lemma \ref{lemma2}.}
We give details for the case $x_1>\frac{1}{2}$.
Note that $x_1>x_2$.

Exploiting previous calculations, we obtain
\begin{eqnarray*}
f(x_1,\ldots,x_k) & = & 
\left(x_1-\frac{3}{2}x_1^2+\frac{5}{6}x_1^3-\frac{1}{6}\right)
+\sum_{i=2}^k\left(\frac{1}{2}x_i^2-\frac{1}{2}x_i^3\right)\\
&&+\sum_{j=2}^k\left(\frac{5}{2}x_1^2x_j+\frac{1}{2}x_1x_j^2-3x_1x_j+x_j-\frac{2}{3}x_j^3\right)\\
&&+\sum_{i=2}^{k-1}\sum_{j=i+1}^k\left(x_ix_j+\frac{1}{2}x_ix_j^2-\frac{3}{2}x_i^2x_j-\frac{2}{3}x_j^3\right).
\end{eqnarray*}
Our first goal is to show that $x_2=\ldots=x_k$.
Therefore, similarly as above,
suppose, for a contradiction, that these $x_i$ are not all equal,
and that $i\in \{ 2,\ldots,k-1\}$ is the smallest index with $x_i>x_{i+1}$.
Again, for a suitable function $\hat{g}$, we have
\begin{eqnarray*}
f(x_1,\ldots,x_k) & = & \hat{g}(x_1,\ldots,x_{i-1},x_{i+2},\ldots,x_k)\\
&&+\frac{1}{2}x_i^2
-\frac{1}{2}x_i^3\\
&&
+\frac{1}{2}x_{i+1}^2
-\frac{1}{2}x_{i+1}^3\\
&&+\Big(a_1-x_i-3x_1+1\Big)x_i
+\frac{1}{2}\Big(a_1-x_i+x_1\Big)x_i^2
-\frac{3}{2}\Big(a_2-x_i^2-\frac{5}{3}x_1^2\Big)x_i
-\frac{2}{3}(i-1)x_i^3\\
&&+b_1x_i+\frac{1}{2}b_2x_i-\frac{3}{2}b_1x_i^2-\frac{2}{3}b_3\\
&&+\Big(a_1-x_{i}-3x_1+1\Big)x_{i+1}
+\frac{1}{2}\Big(a_1-x_{i}+x_1\Big)x_{i+1}^2
-\frac{3}{2}\Big(a_2-x_{i}^2-\frac{5}{3}x_1^2\Big)x_{i+1}
-\frac{2}{3}(i-1)x_{i+1}^3\\
&&+b_1x_{i+1}+\frac{1}{2}b_2x_{i+1}-\frac{3}{2}b_1x_{i+1}^2-\frac{2}{3}b_3\\
&&+x_ix_{i+1}+\frac{x_ix_{i+1}^2}{2}-\frac{3x_i^2x_{i+1}}{2}-\frac{2x_{i+1}^3}{3},
\end{eqnarray*}
where $a_\ell=(i-1)x_i^\ell$
and $b_\ell=\sum\limits_{j=i+2}^kx_j^\ell$.

We obtain
\begin{eqnarray*}
&& f(x_1,\ldots,x_i-\epsilon,x_{i+1}+\epsilon,\ldots,x_k)-f(x_1,\ldots,x_k)\\
&=& 
\underbrace{\left(
\left(
1-x_1+\left(i+\frac{1}{2}\right)x_i+\left(2i+\frac{5}{2}\right)x_{i+1}+3b_1
\right)(x_i-x_{i+1})
+x_{i+1}^2+2x_{i+1}
\right)}_{>0}\epsilon+O(\epsilon^2),
\end{eqnarray*}
which implies the contradiction
$$f(x_1,\ldots,x_i-\epsilon,x_{i+1}+\epsilon,\ldots,x_k)>f(x_1,\ldots,x_k)$$
for sufficiently small $\epsilon>0$.
Hence, we have $x_2=\ldots=x_k=y$
for some $y$ with $0\leq y\leq \frac{1-x_1}{k-1}<\frac{1}{4}$.

Let
\begin{eqnarray*}
f_2(x_1,y)&=&f(x_1,y\ldots,y)\\
&=& 
\left(x_1-\frac{3}{2}x_1^2+\frac{5}{6}x_1^3-\frac{1}{6}\right)
+(k-1)\left(\frac{1}{2}y^2-\frac{1}{2}y^3\right)\\
&&+(k-1)\left(\frac{5}{2}x_1^2y+\frac{1}{2}x_1y^2-3x_1y+y-\frac{2}{3}y^3\right)
+{k-1\choose 2}\left(y^2+\frac{1}{2}y^3-\frac{3}{2}y^3-\frac{2}{3}y^3\right)\\
&=& 
\left(-\frac{5}{6}k^2+\frac{4}{3}k-\frac{1}{2}\right)y^3
+
\left(\frac{(k-1)(k+x_1-1)}{2}\right)y^2\\
&&+
\left(\frac{(k-1)(5x_1^2-6x_1+2)}{2}\right)y
+
\left(x_1-\frac{3}{2}x_1^2+\frac{5}{6}x_1^3-\frac{1}{6}\right).
\end{eqnarray*}
For $y<\frac{2}{5}$, we have
\begin{eqnarray*}
\frac{\partial}{\partial y}f_2(x_1,y)
& = & \frac{k-1}{2}\left(\underbrace{(3-5k)}_{<0}y^2+(2k+2x_1-2)y+(5x_1^2-6x_1+2)\right)\\
& \stackrel{y<\frac{2}{5}}{\geq} & \frac{k-1}{2}\left((3-5k)\frac{2}{5}y+(2k+2x_1-2)y+(5x_1^2-6x_1+2)\right)\\
&=&  \frac{k-1}{2}\left(\underbrace{\left(2x_1-\frac{4}{5}\right)}_{>0\,\,for\,\,x_1\geq \frac{1}{2}}y+\underbrace{(5x_1^2-6x_1+2)}_{>0\,\,for\,\,x_1\geq \frac{1}{2}}\right)\\
&>& 0,  
\end{eqnarray*}
that is, $f_2(x_1,y)$ is increasing in $y\leq \frac{1-x_1}{k-1}<\frac{1}{4}$,
which implies $y=\frac{1-x_1}{k-1}$.

Let
\begin{eqnarray*}
f_3(x_1)&=&f_2\left(x_1,\frac{1-x_1}{k-1}\right)\\
& = & \frac{(-10x_1^3 + 27x_1^2 - 24x_1 + 8)k^2 + (28x_1^3 - 75x_1^2 + 66x_1 - 21)k - 16x_1^3 + 42x_1^2 - 36x_1 + 11}{6(k - 1)^2}.
\end{eqnarray*}
We have $\frac{d}{dx_1}f_3(x_1)=0$ 
only for $x_1\in \left\{ \frac{4k-3}{5k-4},1\right\}$,
where $\frac{1}{2}<\frac{4k-3}{5k-4}<1$.

Since 
$$\frac{d^2}{dx_1^2}f_3(x_1)\mid_{x_1=\frac{4k-3}{5k-4}}=6k^2-18k+12>0$$
and
$$f_3\left(\frac{1}{2}\right)-f_3\left(1\right)=\frac{2k^2-5k+2}{24(k-1)^2}>0,$$
it follows that 
$$f(x_1,\ldots,x_k)\leq \max\left\{ f_3(x_1):x_1\in \left[\frac{1}{2},1\right]\right\}=f_3\left(\frac{1}{2}\right)=\frac{(3k-2)(2k-3)}{24(k-1)^2}<\frac{1}{2}-\frac{5}{6k}+\frac{1}{3k^2},$$
which completes the proof.
$\Box$

\end{document}